\newtheorem{theo}{\indent Theorem}[section]
\newtheorem{prop}[theo]{\indent Proposition}
\newtheorem{rem}[theo]{\indent Remark}
\newtheorem{lem}[theo]{\indent Lemma}
\newtheorem{defin}[theo]{\indent Definition}
\newtheorem{ass}[theo]{\indent Assumption}
\def\D{{\mathbb D}}
\def\cP{{\mathcal P}}
\def\vip{\vskip0.2cm}
\def \E{I\!\!E}
\def \P{I\!\!P}
\def \intot{\int_0^t }
\def \cS{\mathcal S}
\def \cB{\mathcal B}
\def \cL{\mathcal L}
\def \cP{\mathcal P}
\newcommand{\R}{\mathbb {R}}
\newcommand{\N}{\mathbb {N}}
\newcommand{\bN}{{\mathbf{N}}}
\def\indiq{{\bf 1}}
\newlength{\breite}
\title[Hawkes with inhibition]{Stability for Hawkes processes with inhibition}
\date{}
\author{Mads Bonde Raad}
\author{Eva L{\"o}cherbach}
\address{Mads Bonde Raad: Department of Mathematical Sciences,
		University of Copenhagen, Universitetsparken 5, 2100 Copenhagen, Danemark.}
\email{madsraad@gmail.com}
\address{E. L\"ocherbach: Universit{\'e} de Paris 1 Panth\'eon-Sorbonne, SAMM,  EA 4543 et FR FP2M 2036 CNRS, 75013 Paris, France.}
\email{eva.locherbach@univ-paris1.fr}
\begin{document}
\maketitle
\def\abstractname{Abstract}
\begin{abstract}
We consider a multivariate non-linear Hawkes process in a multi-class setup where particles are organised within two populations of possibly different sizes, such that one of the populations acts excitatory on the system while the other population acts inhibitory on the system. The goal of this note is to present a class of Hawkes Processes with stable dynamics without assumptions on the spectral radius of the associated weight function matrix. This illustrates how inhibition in a Hawkes system significantly affects the stability properties of the system. 
\end{abstract}

{\it Key words} : Multivariate nonlinear Hawkes processes, Stability, Piecewise deterministic Markov processes, Lyapunov functions.
\\

{\it MSC 2000}  : 60G55; 60G57; 60J25; 60Fxx

\section{Introduction and main result}\label{sec:1}
We consider a system of interacting Hawkes processes structured within two populations. We shall label the two populations with ``$+$'' or ``$-$'' signaling that the population acts excitatory or inhibitory on the system, respectively. Let $N_+,N_{-} \in \N$ be the number of units in each population. Introduce weight functions given by 
\begin{eqnarray}\label{eq:weightfunctions}
h_{++} ( t)& =&\frac{c_{++}}{N_{+}} e^{ -\nu_{+}t}, \quad  h_{+-}( t)=\frac{c_{+-}}{N_{+}}e^{ -\nu_{+}t},\\
h_{-+}( t)&=&\frac{c_{-+}}{N_{-}} e^{ -\nu_{-}t},\quad   h_{--}( t)=\frac{c_{--}}{N_{-}} e^{ -\nu_{-}t}, 
\end{eqnarray}
for $ t \geq 0.$ 
In the above formula, $h_{+-}$ indicates the weight function from a unit in the excitatory group ``$+$'' to a unit in the inhibitory group ``$-$'', and so on.  The coefficients of the system of interacting Hawkes processes are the exponential leakage terms $ \nu_+ > 0 , \nu_- > 0  $  and the weights $ c_{++} , c_{+-} , c_{-+}, c_{--} $ satisfying that
\begin{equation}
 c_{++} \geq 0, \; c_{+-} \geq 0, \; c_{--} \leq 0, \;  c_{-+} \leq 0.
\end{equation}
The multivariate linear Hawkes process with these parameters is given as
\begin{eqnarray}\label{eq:dyn}
Z_{+}^i (t)  &= &     \int_0^t  \int_0^\infty
 \indiq_{ \{ z \leq  \psi^{i}_+   ( X_{+} ({s-})) \}} \pi_{+}^i  (ds,  dz) , 1 \le  i\leq N_{+}, \\
Z_{-}^j  (t)  &= &     \int_0^t  \int_0^\infty
 \indiq_{ \{ z \leq  \psi^{j}_-   ( X_{-} ({s-})) \}} \pi_{-}^j (ds,  dz) ,1 \le j\leq N_{-}, \\
X_+ (t) &= & e^{- \nu_+ t } X_+ (0) + \frac{c_{++}}{N_+}  \sum_{i=1}^{N_{+}}\int_0^t e^{- \nu_+ ( t- s) } Z_{+}^i (d s)   + \frac{c_{-+}}{N_-}  \sum_{j=1}^{N_{-}}\int_0^t e^{- \nu_+ ( t- s) } Z_{-}^j  (ds) ,\\
X_{-} (t) &= & e^{- \nu_- t } X_{-} (0 )+ \frac{c_{+-}}{N_+} \sum_{i=1}^{N_{+}}\int_0^t e^{- \nu_- ( t- s) }  Z_{+}^i (ds)   +\frac{ c_{--}}{N_-}  \sum_{j=1}^{N_{-}}\int_0^t e^{- \nu_- ( t- s) }  Z_{-}^j  (ds) ,
\end{eqnarray}
where the  jump rate functions $\psi^i_+  : \R \to \R_+ , \psi^i_- : \R \to \R_+ $ are given by
\begin{equation}\label{eq:aplus}
\psi^i_{\pm} (x) = a^i_{\pm} + \max (x, 0),  \; \mbox{ where } a^i_{\pm } > 0 ,
\end{equation}
and where the $ \pi^i_{\pm}  , i \geq 1, $ are i.i.d. Poisson random measures on $ \R_+ \times \R_+ $ having intensity $ dt dz.$

Notice that the process $ ( X_+, X_{-} ) $ is a piecewise deterministic Markov process having generator 
\begin{multline}
A g (x, y ) = - \nu_+ x \partial_x g (x,y ) - \nu_- y \partial_y g (x, y ) + \sum_{i=1}^{N_+} \psi^i_+ ( x) [ g ( x + \frac{c_{++}}{N_+} , y+ \frac{c_{+-}}{N_+}  )- g(x,y ) ] \\
+\sum_{j=1}^{N_-}  \psi_-^j  (y ) [ g ( x + \frac{c_{-+}}{N_-} , y + \frac{c_{--}}{N_-}  ) - g(x, y ) ] ,
\end{multline}
for sufficiently smooth test functions $g.$ 

Classical stability results for multivariate nonlinear Hawkes processes found e.g. in \cite{bm} or in the recent paper  \cite{manonetal}, which is devoted to the study of the stabilising effect of inhibitions, are stated in terms of an associated  weight function matrix $\Lambda,$ imposing that the spectral radius of $\Lambda$ is strictly smaller than one. In this case the process is termed to be {\it subcritical}. This spectral radius stability condition has a natural  interpretation in terms of a multitype branching process with immigration which is spatially structured and where each jump of a given type ($+ $ or $-$) gives rise to future jumps of the same or of the opposite type, see \cite{ho}. The subcriticality condition ensures the recurrence of this process (see \cite{kaplan}). In our system, the weight function matrix is given by
\begin{equation}\label{eq:Lambda}
\Lambda = \left( \begin{array}{cc}
\frac{c_{++}}{\nu_+} &  \frac{|c_{-+}|}{\nu_+} \\ 
\frac{c_{+-}}{\nu_-} &  \frac{|c_{--}|}{\nu_-}
\end{array}
\right)  .
\end{equation}
Notice that in \eqref{eq:Lambda}, negative synaptic weights do only appear through their absolute values. This is due to the fact that using the Lipschitz continuity of the rate functions leads automatically to considering absolute values and does not enable us to make profit from the inhibitory action of $c_{-+} $ and $ c_{--}. $ Obviously, having sufficiently fast decay, that is,  $ \min (\nu_+ , \nu_-) >> 1, $ is a sufficient condition fo subcriticality. 

The purpose of this note is to show how the presence of sufficiently high (in absolute value) negative  weights helps stabilising the process without imposing such a subcriticality condition, in particular, without imposing $ \nu_+, \nu_- $ being large. To the best of our knowledge, only few results have been obtained  on this natural  question in the literature. \cite{bm} gives an attempt in this direction but does only deal with the case when $ c_{+- } $ and $ c_{-+}$ are of the same sign (see Theorem 6 in \cite{bm}), and \cite{manonetal} do only work with the positive part of the weight functions, without  profiting from the explicit inhibitory part within the system. 

Our approach is based on the construction of a convenient Lyapunov function using the inhibitory part of the dynamics. As such,  this approach is limited to the present Markovian framework where the weight functions are decreasing exponentials. 

In the following, we shall write
$$ c_{++}^* := c_{++} - \nu_+ , \; c_{-- }^* := c_{--} - \nu_{-} .$$
Notice that $ c_{++}^* $ could be interpreted as the net increase of $ X_+ $ due to self-interactions of $X_+ $ with itself. $ c_{--}^* $ is always negative.

\begin{ass}\label{aslol}
We assume the following inequalities.
\begin{eqnarray}\label{eq:stab}
 c_{++}^* +c_{--}^* &<& 0    ,\\
 ( c_{++}^*- c_{--}^* )^2 &< &4 c_{+- } | c_{-+}| ,\\
 c_{++}^* -  c_{--}^* &>& 0.
\end{eqnarray}
\end{ass}

This assumption ensures that the system is balanced. Notice that Assumption \ref{aslol} does not imply - nor is implied by - that the spectral radius of $\Lambda$ is strictly smaller than $1$. For example, if Assumption \ref{aslol} is satisfied for some parameters $ ( c_{++},c_{+-},c_{-+},c_{--},\nu,\nu ) , $ i.e., $\nu_+ = \nu_- = \nu, $  such that additionally $ c_{++} + c_{--} < 0, $ then for all $C>1 $ and all  $\varepsilon>0,$ the set of parameters   $(  Cc_{++},Cc_{+-},Cc_{-+},Cc_{--},\varepsilon\nu,\varepsilon\nu ) $ satisfies Assumption \ref{aslol} as well. But the associated offspring matrix $\Lambda_{C,\varepsilon}$ of the scaled parameters is equal to $(C/\varepsilon) \Lambda , $ and thus the spectral radius is also scaled by $C/\varepsilon$.

\begin{ass}\label{ass:2d}
We assume that either $ \nu_+ \neq \nu_- $ or $ \nu_+ = \nu_- $ and $( c_{++},c_{+-} ) ,( c_{-+},c_{--}) $ are linearly independent.
\end{ass}

We are now able to state our main result. It states that under Assumptions \ref{aslol} and \ref{ass:2d}, the process $ X = (X_+, X_{-} ) $ is positive Harris recurrent, together with a strong mixing result. To state our result, for any $ t > 0  $ and for $ z = (x,y ) \in \R^2 ,$ we write $ P_t ( z, \cdot )$ for the transition semigroup of the process, defined through $P_t (z, A) = E_z ( 1_A (X (t)) ) .$ Moreover, for any pair of probability measures $\mu_1, \mu_2 $ on $ {\mathcal B} (\R^2)$ and for any function $ V : \R^2 \to [1, \infty [, $ we put 
$$  \| \mu_1- \mu_2  \|_{ V} := \sup_{ g : |g| \le  V } | \mu_1 ( g) - \mu_2 (g)  | .$$

\begin{theo}\label{theo:harris}
Grant Assumptions \ref{aslol} and \ref{ass:2d}. \\
1) Then the process $ X = (X_+, X_- ) $ is positive recurrent in the sense of Harris, and its unique invariant probability measure $ \mu $ possesses a Lebesgue continuous part. \\ 
2) There exists a function  $V (x, y ) : \R^2 \to [1, \infty [  $ such that  $\lim_{ |x| + |y| \to \infty } V ( x,y ) = \infty $ and there exist  $ c_1, c_2 > 0 $ such that for all $z \in \R^2$ and all $ t \geq 0, $ 
\begin{equation}\label{eq:last}
\| P_t(z , \cdot )  - \mu\|_{ V} \le c_1  V (z) e^{ - c_2 t} .
\end{equation}  
\end{theo}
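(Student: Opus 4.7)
My plan is to apply the Meyn--Tweedie program for $V$-geometric ergodicity of continuous-time Markov processes: establish (i) a Foster--Lyapunov drift inequality of the form $A V \le -c V + d\, \indiq_K$ on some compact set $K$, and (ii) that every compact set is petite. Once both hold, the continuous-time $V$-geometric ergodicity theorem of Down--Meyn--Tweedie delivers positive Harris recurrence, a unique invariant probability $\mu$, and the exponential estimate (\ref{eq:last}).

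\textbf{Construction of $V$.} In the positive orthant $\{x > 0,\, y > 0\}$ the deterministic mean drift of the PDMP $(X_+, X_-)$ is exactly the linear system with matrix
\[
M \;=\; \begin{pmatrix} c_{++}^* & c_{-+} \\ c_{+-} & c_{--}^* \end{pmatrix},
\]
and the three inequalities of Assumption \ref{aslol} translate respectively into $\operatorname{tr}(M) < 0$, $\operatorname{tr}(M)^2 < 4 \det(M)$, and an auxiliary sign condition. The first two state precisely that $M$ has a conjugate pair of eigenvalues with strictly negative real part, so the Lyapunov equation $M^\top Q + Q M = -I$ admits a symmetric positive-definite solution $Q = \begin{pmatrix} \alpha & \beta \\ \beta & \gamma \end{pmatrix}$. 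I would then try $V(x, y) = 1 + \alpha x^2 + 2 \beta x y + \gamma y^2$.

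\textbf{Drift computation.} A direct calculation of $A V$ decomposes into: (a) in $\{x > 0, y > 0\}$, a leading quadratic term equal to $(x, y)\, (M^\top Q + Q M)\, (x, y)^\top = -|(x, y)|^2$; (b) in each of the other three orthants, the rate of the ``switched off'' population collapses to its constant part $a_\pm^i$, while the linear damping $-\nu_\pm$ still pulls the relevant coordinate back, producing a quadratic form whose negativity I expect to secure by using the third inequality of Assumption \ref{aslol} to tune $\beta$; and (c) a jump-variance remainder which is at most linear in $|(x, y)|$, since jump sizes are $O(1/N_\pm)$ while total rates grow at most linearly. Outside a sufficiently large compact $K$, the negative quadratic term dominates the linear remainder and one obtains $A V \le -c V + d\, \indiq_K$.

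\textbf{Petite property of compact sets.} Here Assumption \ref{ass:2d} enters. For any $z$ and $t$, the support of $P_t(z, \cdot)$ is the reachable set obtained by flowing along $-\nu_+ x \partial_x - \nu_- y \partial_y$ and inserting an arbitrary finite sequence of jumps of size $(c_{++}, c_{+-})/N_+$ or $(c_{-+}, c_{--})/N_-$. Assumption \ref{ass:2d} rules out the degenerate cases where this reachable set would be one-dimensional: either the two jump vectors already span $\R^2$, or $\nu_+ \ne \nu_-$ makes the flow rescale a single direction anisotropically, so that two successive jumps with a variable intermediate waiting time sweep out an open subset of $\R^2$. A standard PDMP density argument, integrating the Poisson intensities against the two free jump-time variables, then produces a Lebesgue-continuous minorant of $P_t(z, \cdot)$ on some open set, uniformly for $z$ in any compact. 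This shows that all compact sets are petite and simultaneously supplies the Lebesgue continuous part of $\mu$ asserted in 1).

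The main obstacle will be part (b) of the drift computation: $A V$ has different leading quadratic behaviour in each of the four orthants, and ensuring negative-definiteness in the mixed orthants where $x_+ = 0$ or $y_+ = 0$ is where the third, seemingly minor, inequality $c_{++}^* - c_{--}^* > 0$ of Assumption \ref{aslol} must be used in an essential way, to choose the cross-coefficient $\beta$ of the Lyapunov form correctly and to balance the missing excitation against the remaining linear damping.
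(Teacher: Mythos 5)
Your high-level strategy (Foster--Lyapunov drift plus petite compact sets plus Meyn--Tweedie) is exactly the paper's, and your observation that the first two inequalities of Assumption \ref{aslol} say precisely that $M$ has trace $<0$ and complex eigenvalues is correct and illuminating. But there is a genuine gap at the point you yourself flag as ``the main obstacle'': the drift inequality in the three quadrants where one or both rates saturate at their constant part. Your candidate $V$ is \emph{fixed} by the Lyapunov equation $M^\top Q + QM = -I$, so there is no cross-coefficient $\beta$ left to ``tune''; and for that fixed $Q$ the quadratic part of $AV$ in, say, the quadrant $\{x\geq 0, y\leq 0\}$ has cross-coefficient $\alpha|c_{-+}|+\beta|c_{--}|$ (and $-c_{++}\beta - c_{+-}\gamma$ in $\{x\leq 0,y\geq 0\}$), whose signs are not controlled by Assumption \ref{aslol}: since $xy$ has a fixed sign on each mixed quadrant, a cross-coefficient of the wrong sign can overwhelm the negative diagonal terms, and nothing in your argument rules this out. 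You have verified negativity only in the positive quadrant, which is the easy part.

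The paper resolves this by abandoning a single global quadratic form: it glues \emph{four} quadratic forms, one per quadrant, all sharing the cross term $-(c_{++}^*-c_{--}^*)xy$ --- whose sign is pinned down by the third inequality of Assumption \ref{aslol} --- while the diagonal coefficients are replaced on the negative half-axes by free parameters $p$ (small) and $q$ (large) chosen so that the cross-coefficient of $AV$ acquires the sign opposite to that of $xy$ on each mixed quadrant. The price is that jumps crossing an axis see a discontinuous change of form, which must be (and is) shown to contribute only bounded terms because the jump sizes are $O(1/N_\pm)$; your single-form proposal avoids this gluing issue but at the cost of the unverified, and in general false, mixed-quadrant negativity. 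Your petite-set sketch is in the right spirit (it is essentially the paper's argument in the case $\nu_+=\nu_-$ with linearly independent jump vectors, conditioning on one jump per population and integrating over the two uniform jump times), but for $\nu_+\neq\nu_-$ the paper instead lifts to a four-dimensional process and invokes Clinet--Yoshida; your heuristic that anisotropic rescaling of a single jump direction suffices is plausible but not a proof.
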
 

\begin{rem}
Notice that if Assumption \ref{ass:2d} is not satisfied, that is, if $ \nu_+ = \nu_- $ and if
$$\left( \begin{array}{c} c_{-+}\\
 c_{--} 
\end{array}\right) \in H:= \R \left( \begin{array}{c} c_{++}\\
 c_{+-} 
\end{array}\right),$$  
then it is easily shown that almost surely, $ dist ( X (t) , H) \to 0 $ as $t \to \infty $ and that $H$ is invariant under the dynamics. Moreover, the restriction of the dynamics to $H$ is Harris recurrent, having a unique invariant measure  $ \mu $ which is absolutely continuous with respect to the Lebesgue measure on $H.$ However, it is easy to show that the original process $X,$ defined on $ \R^2, $ is not Harris in this case, since it is not $ \mu-$irreducible. 
\end{rem}

\section{Proof of Theorem \ref{theo:harris}}
This section is devoted to the proof of Theorem \ref{theo:harris}. 

\subsection{A Lyapunov function for $X$}
We start this section with the following useful property.
\begin{prop}\label{prop:Feller}
The process $ X$ is a Feller process, that is, for any $f : \R^2 \to \R$ which is bounded and continuous, we have that $\R^2 \ni  (x, y ) = z \mapsto  E_{z} f (X (t) ) = P_t f (z)  $ is continuous. 
\end{prop}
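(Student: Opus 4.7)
The plan is to establish the Feller property by a pathwise coupling argument. I will couple two copies $X^z=(X^z_+,X^z_-)$ and $X^{z'}=(X^{z'}_+,X^{z'}_-)$ of the process started from $z=(x,y)$ and $z'=(x',y')$ by driving them with the same family of Poisson random measures $\pi^i_\pm$. Between jumps each coordinate decays deterministically via $\dot u=-\nu_\pm u$, and at a Poisson atom $(s,u)\in\pi^i_+$ the process $X^z$ jumps iff $u\le\psi^i_+(X^z_+(s-))$, analogously on the other coordinate.

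Call such an atom a \emph{decoupling event} if exactly one of the two processes jumps at it, that is, if $u$ lies strictly between $\psi^i_+(X^z_+(s-))$ and $\psi^i_+(X^{z'}_+(s-))$ (and likewise on the ``$-$'' side). Let $\tau$ denote the first decoupling time. On $\{\tau>s\}$ the two processes have jumped at identical times by identical increments, so the differences $D_\pm(s):=X^z_\pm(s)-X^{z'}_\pm(s)$ solve the linear ODE $\dot D_\pm=-\nu_\pm D_\pm$, yielding the pathwise bound $|D_\pm(s)|\le e^{-\nu_\pm s}|z_\pm-z'_\pm|$ for $s<\tau$. Since $x\mapsto\max(x,0)$ is $1$-Lipschitz, the total instantaneous rate of decoupling events at time $s<\tau$ is at most
\begin{equation*}
\sum_{i=1}^{N_+}|\psi^i_+(X^z_+(s-))-\psi^i_+(X^{z'}_+(s-))| + \sum_{j=1}^{N_-}|\psi^j_-(X^z_-(s-))-\psi^j_-(X^{z'}_-(s-))| \;\le\; (N_++N_-)|z-z'|,
\end{equation*}
so by a standard domination by an inhomogeneous Poisson process, $\P(\tau\le t)\le (N_++N_-)|z-z'|\,t$.

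To conclude, fix a bounded continuous $f$ and $t>0$, and split
\begin{equation*}
|E_z f(X(t))-E_{z'}f(X(t))| \;\le\; 2\|f\|_\infty\,\P(\tau\le t) + E\bigl[|f(X^z(t))-f(X^{z'}(t))|\indiq_{\{\tau>t\}}\bigr].
\end{equation*}
The first term vanishes as $z'\to z$ by the estimate above. On $\{\tau>t\}$, $|X^z(t)-X^{z'}(t)|\to 0$ by the exponential bound on $D_\pm$, so by continuity of $f$ and dominated convergence the second term also vanishes, yielding the continuity of $z\mapsto P_tf(z)$.

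The only delicate point is that the coordinates of $X$ are a priori unbounded, so a naive Lipschitz estimate on the rates along a single trajectory would not suffice; the coupling is precisely what converts the Lipschitz bound on the $\psi^i_\pm$ into a control depending only on $|z-z'|$, with no need for a pathwise a priori bound on the process itself. Beyond this observation the argument is routine.
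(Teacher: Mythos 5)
Your argument is correct, but it is genuinely different from what the paper does: the paper gives no proof at all for this proposition and simply defers to ``classical arguments'' (Proposition 4.8 of H\"opfner--L\"ocherbach and the piecing-out construction of Ikeda--Nagasawa--Watanabe), which proceed by decomposing $E_z f(X(t))$ according to the successive jump times and propagating continuity in $z$ through the deterministic flow and the jump kernel, induction step by induction step. Your synchronous coupling through the common Poisson random measures $\pi^i_\pm$ replaces that induction by a single estimate on the first decoupling time $\tau$, and the key observation is sound: before $\tau$ the two copies receive identical increments at identical times, so $D_\pm$ obeys $\dot D_\pm=-\nu_\pm D_\pm$ and stays bounded by $|z-z'|$, whence the $1$-Lipschitz property of $\psi^i_\pm(x)=a^i_\pm+\max(x,0)$ turns the decoupling intensity into the state-free bound $(N_++N_-)|z-z'|$ and $\P(\tau\le t)\le (N_++N_-)|z-z'|t$; the dominated-convergence step on $\{\tau>t\}$ works because $|X^{z'}(t)-X^z(t)|\le |z-z'|$ holds pathwise there, so one can dominate by $\sup_{|w-X^z(t)|\le|z-z'|}|f(w)-f(X^z(t))|\wedge 2\|f\|_\infty$, which tends to $0$ pointwise by continuity of $f$ at the (random) point $X^z(t)$. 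What your route buys is a self-contained, quantitative modulus of continuity for $P_tf$ (indeed essentially a local Lipschitz bound up to the oscillation of $f$), exploiting precisely the two structural features of this model --- constant jump amplitudes and globally Lipschitz rates --- that make the coupling close; the cited classical route is less explicit but more robust, since it does not need the Lipschitz property of the rates nor state-independent jumps. The only points you leave implicit, as does the paper, are non-explosion (which holds here since the rates grow linearly while the jumps are bounded, so $E_zf(X(t))$ is well defined) and the measure-zero nature of the boundary case $u=\psi^i_\pm(\cdot)$ in the definition of a decoupling event; neither is a genuine gap.
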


The proof of this result follows from classical arguments, see e.g.\ the proof of Proposition 4.8 in \cite{evaflow}, or \cite{ikeda1966}. 

The next result shows that if the cross-interactions, that is, influence from $ X_+ $ to $X_{-} $ and vice versa, are sufficiently strong, then -- under mild additional assumptions --  it is possible to construct a Lyapunov function for the system that does mainly profit from the inhibitory part of the jumps. 

\begin{prop}\label{prop:lyapunov}
Grant Assumption \ref{aslol} and put 
$$ V ( x, y ) := \left\{ 
\begin{array}{ll}
V_{++} ( x, y ) : =   c_{+- } x^2 -c_{-+}y^2 - (c_{++}^* - c_{--}^*) xy & x \in \R_+ , y\in \R_+ \\
V_{+-} (x,y ) := c_{+- } x^2 + q y^2 - (c_{++}^* - c_{--}^*) xy & x\in \R_+ , y \in \R_- \\
V_{-+} (x,y ) :=   px^2 -c_{-+}y^2 - (c_{++}^* - c_{--}^*) xy & x \in \R_- , y\in \R_+ \\
V_{--} (x,y ) :=  p x^2 + qy^2 - (c_{++}^* - c_{--}^*) xy & x \in \R_- , y\in \R_-
\end{array}
\right\} , $$
with $p$ so small such that 
$$ -  (c_{++}^* -  c_{--}^* ) (c_{--} - \nu_+ - \nu_- ) + 2 p c_{-+} > 0 $$
and
$q$ so large such that 
$$ (c_{++}^* -  c_{--}^* ) [ \nu_+ + \nu_- - c_{++} ] + 2 q c_{+- } > 0 \mbox{ and } 4 pq >  (c_{++}^* - c_{--}^*)^2 .$$
Then $\lim_{ |x| + |y| \to \infty } V ( x,y ) = \infty $ and there exist $ \kappa, c, K  > 0 $ such that 
\begin{equation}
A V (x,y ) \le - \kappa V ( x, y ) + c 1_{\{ | x| + |y| \geq K\}} .
\end{equation}
\end{prop}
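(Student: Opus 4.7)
The plan is to apply the generator $A$ directly to the piecewise-quadratic function $V$, working quadrant by quadrant. I would first check that $V$ is not only continuous on $\R^2$ --- the four pieces agree along the coordinate axes --- but actually $C^1$: the gradients $\nabla V_Q$ evaluated on $\{y=0\}$ or $\{x=0\}$ reduce to expressions in which the values of $p$ and $q$ drop out (e.g.\ $\partial_x V|_{y=0}=2c_{+-}x$ for $x\geq 0$, irrespective of whether we use $V_{++}$ or $V_{+-}$), so the gradients match across the axes. Writing $\beta:=c_{++}^*-c_{--}^*>0$, each piece is coercive on its own quadrant: for $V_{++}$ by the discriminant condition $4c_{+-}|c_{-+}|>\beta^2$ from Assumption \ref{aslol}; for $V_{--}$ by $4pq>\beta^2$; and for $V_{+-},V_{-+}$ because $-\beta xy\geq 0$ in the respective quadrant. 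Hence $V(x,y)\to\infty$ as $|x|+|y|\to\infty$.

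Next I would compute $AV$ in the interior of a fixed quadrant $Q$. The drift contribution $-\nu_+ x\,\partial_x V_Q-\nu_- y\,\partial_y V_Q$ is quadratic in $(x,y)$. Since the jump amplitudes are $O(1/N_\pm)$, for $|x|+|y|$ large the post-jump state stays in $Q$ and the Taylor expansion of the quadratic $V_Q$ is exact. The $+$-population has total rate $\sum_i\psi_+^i(x)=a_+^{\mathrm{tot}}+N_+\max(x,0)$: in quadrants with $x\geq 0$ the piece $N_+x$ multiplies the $O(1/N_+)$ linear part of $\Delta V_Q$ to give a genuinely quadratic contribution, while for $x\leq 0$ the rate is just the bounded baseline $a_+^{\mathrm{tot}}$ and only an affine contribution results. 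A symmetric statement holds for the $-$-population. The few jumps that do cross an axis live in a strip of bounded width adjacent to it and contribute $O(1)$ to $AV$.

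After collecting terms we get, outside a bounded set, $AV_Q(x,y)=\mathcal Q_Q(x,y)+L_Q(x,y)$ with $\mathcal Q_Q$ quadratic and $L_Q$ affine. The core calculation is in quadrant $++$: using $c_{\pm\pm}-\nu_\pm=c_{\pm\pm}^*$ and the identity $(c_{++}^*)^2-(c_{--}^*)^2=(c_{++}^*+c_{--}^*)\beta$, the coefficients of $x^2$, $y^2$ and $xy$ in $\mathcal Q_{++}$ turn out to be $(c_{++}^*+c_{--}^*)$ times the corresponding coefficients of $V_{++}$; thus $\mathcal Q_{++}=-\alpha V_{++}$ with $\alpha:=-(c_{++}^*+c_{--}^*)>0$ (positive by Assumption \ref{aslol}). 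In quadrants $+-$ and $-+$, the constraints on $q$ and on $p$ respectively are precisely designed so that the $xy$-coefficient of $\mathcal Q_Q$ has the correct sign to yield $\mathcal Q_Q\leq-\kappa V_Q$ for some $\kappa>0$, using that $xy\leq 0$ in those quadrants.

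The main obstacle is quadrant $--$: both firing rates are bounded there, so the jump contributions are purely affine, and the leading quadratic $\mathcal Q_{--}(x,y)=-2\nu_+ p x^2-2\nu_- q y^2+(\nu_++\nu_-)\beta\,xy$ comes entirely from the drift. Here $xy\geq 0$ makes the cross term adverse, and proving $\mathcal Q_{--}\leq -\kappa V_{--}$ on the quadrant reduces to checking negative semidefiniteness of a $2\times 2$ quadratic form, hence a discriminant condition essentially of the form $16\nu_+\nu_- pq\geq(\nu_++\nu_-)^2\beta^2$; this is where $4pq>\beta^2$ enters, with $q$ taken sufficiently large to secure the bound in full generality. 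Combining the four quadrants we obtain $AV\leq-\kappa V+L$ with $L(x,y)=O(|x|+|y|+1)$, and a Young-type bound $L\leq (\kappa/2)V+c$ outside a large enough ball absorbs the affine remainder, yielding the advertised Lyapunov inequality.
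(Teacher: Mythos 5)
Your argument is correct and is essentially the paper's own proof: the same quadrant-by-quadrant computation, the same key identity that the quadratic part of $AV$ in the first quadrant equals $(c_{++}^*+c_{--}^*)V_{++}$, the same use of the sign of $xy$ together with the constraints on $p$ and $q$ in the two mixed quadrants, and the same discriminant argument in the $--$ quadrant (where you are in fact slightly more careful than the paper, which only invokes $4pq>(c_{++}^*-c_{--}^*)^2$ and does not remark that $q$ must be enlarged further when $\nu_+\neq\nu_-$). The only minor slip is the claim that the axis-crossing jump corrections are $O(1)$: in the strip $0\le x<|c_{-+}|/N_-$ the inhibitory rate grows like $N_-y$, so these corrections are $O(|x|+|y|)$ rather than bounded, but this is harmless since your affine remainder $L$ absorbs them exactly as in the paper's Part 1.2.
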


\begin{proof}
We calculate $ A V ( x, y ) = A^1 V (x, y ) + A^2 V (x, y ) , $ with 
$$ A^1 V ( x, y ) = - \nu_+ \partial_x V (x,y ) - \nu_- \partial_y V (x, y ) $$ 
and $ A^2 $ the jump part of the generator. 

{\bf Part 1.1} Suppose first that $ x \geq |c_{-+}|/ N_-  , y \geq | c_{--} |/N_-  . $ Then 
$$ A V (x, y ) = A^1 V_{++} (x, y) + A^2 V_{++} (x,y ) = a_{++} x^2 + b_{++} xy + d_{++} y^2 + L_{++} (x,y ) ,$$
where $L_{++} $ is a polynomial of degree $1.$ A straightforward calculus shows that 
\begin{eqnarray*}
a_{++} &=& c_{+-} (c_{++}^* + c_{--}^* )   , \\
b_{++} &=& -  (c_{++}^* -  c_{--}^* ) (c_{++}^* + c_{--}^* )\\
d_{++} & =& - c_{-+} (c_{++}^* + c_{--}^* ) ,
\end{eqnarray*}  
proving that 
$$ A V( x, y ) = (c_{++}^* + c_{--}^* ) V (x,y) + L_{++} (x,y ) .$$
This implies that there exist $K,  \kappa > 0 $ such that 
$$ A V ( x, y ) \le - \kappa V ( x,y ) $$
for all $ x > K , y > K,$ since $ c_{++}^* + c_{--}^*  < 0 $ by assumption. 

{\bf Part 1.2} Suppose now that $0 \le  x < |c_{-+} |/N_-  $ and $y \geq | c_{--} |/N_- .$  Then a jump of one of the inhibitory neurons will lead to a change $ x \mapsto x + c_{-+}/N_-  < 0 .$ In this case we obtain
$$ A V (x,y) = A V_{++} ( x,y ) + \sum_{j=1}^{N_-}(a^{j}_-  + y ) (  V_{-+} ( x + \frac{c_{-+}}{N_-}, y + \frac{c_{--}}{N_-} ) - V_{++} ( x + \frac{c_{-+}}{N_-}, y + \frac{c_{-+}}{N_-} )) .$$
But 
$$ |V_{-+} ( x + \frac{c_{-+}}{N_-}, y + \frac{c_{--}}{N_-} ) - V_{++} ( x + \frac{c_{-+}}{N_-}, y + \frac{c_{-+}}{N_-} )| \le C ,$$
since $ | x| < |c_{-+} |,$ and therefore
$$ A V (x,y) \le  A V_{++} ( x,y ) + L (y) , $$
where $ L(y) $ is a monomial in $ y.$ 

The other case $0 \le  y < |c_{--} |/N_-  $ and $x \geq | c_{-+} |/N_- $ is treated analogously.  

{\bf Part 2.1} Suppose now that $ x \geq |c_{-+}|/N_-  , y \leq   - c_{+-} /N_+  . $
Then
$$ A V (x, y ) = A^1 V_{+-} (x, y) + A^2 V_{+-} (x,y ) = a_{+-} x^2 + b_{+-} xy + d_{+-} y^2 + L_{+-} (x,y ) ,$$
where $L_{+-} $ is a polynomial of degree $1.$ We obtain
\begin{eqnarray*}
a_{+-} &=& c_{+-} (c_{++}^* + c_{--}^* )   , \\
b_{+-} &=&   (c_{++}^* -  c_{--}^* ) (\nu_+ + \nu_- - c_{++} ) + 2 q c_{+- } \\
d_{+- } & =& - 2 \nu_- q  .
\end{eqnarray*} 
Since $ b_{+-} > 0 $ by choice of $q,$ this implies that for a suitable positive constant $ \kappa > 0 ,$ 
$$ A V (x,y) \le - \kappa V(x,y) + L_{+-} (x,y) , $$
which allows to conclude as before.

{\bf Part 2.2} The cases $ x \geq |c_{-+}|/N_-  , 0 \geq y >    - c_{+-}/N_+   $ or $ 0 \le x < |c_{-+}|/N_- , y \leq   - c_{+-}/N_+    $ are treated analogously to Part 1.2.

{\bf Part 3} Suppose now that $ x \le -  c_{++} /N_+  , y \geq   - c_{- -} /N_-  . $ Then 
$$ A V (x, y ) = A^1 V_{-+} (x, y) + A^2 V_{-+} (x,y ) = a_{-+} x^2 + b_{-+} xy + d_{-+} y^2 + L_{-+} (x,y ) ,$$
where $L_{-+} $ is a polynomial of degree $1$ and where
\begin{eqnarray*}
a_{-+} &=& -2 \nu_+ p    , \\
b_{-+} &=&  (c_{++}^* -  c_{--}^* ) ( \nu_+ + \nu_- - c_{--} ) + 2 p c_{-+} \\
d_{-+} & =& - c_{-+} (c_{++}^* + c_{--}^* ) .
\end{eqnarray*}  
Notice that by choice of $p,$ $ b_{-+} > 0 .$ The conclusion of this part follows analogously to the previous parts 1.1 and 2.1. 

{\bf Part 4} Suppose finally that $ x \le -  c_{++}/N_+  , y \le    - c_{+ -} /N_+   . $ Then 
$$ A V (x, y ) = A^1 V_{--} (x, y) + A^2 V_{--} (x,y ) = a_{--} x^2 + b_{--} xy + d_{--} y^2 + L_{--} (x,y ) ,$$
where $L_{--} $ is a polynomial of degree $1$ and where
\begin{eqnarray*}
a_{--} &=& -2 \nu_+ p    , \\
b_{--} &=&  (c_{++}^* -  c_{--}^* ) (\nu_+ + \nu_- ) \\
d_{--} & =& - 2 \nu_- q  ,
\end{eqnarray*}  
leading to the same conclusion as in the previous parts. 
\end{proof}

As a consequence of Proposition \ref{prop:lyapunov}, the process $X$ is stable in the sense that it necessarily possesses invariant probability measures, maybe several of them. The uniqueness of the invariant probability measure together with the Harris recurrence will follow from the following local Doeblin type lower bound.

\begin{prop}\label{thm:Doeblin}
For all $ T> 0 $ and for all $z_* = (x_*, y_*)  \in  \R^2 $ the following holds. There exist $R > 0 , $ an open set $ I \subset \R^2 $ with strictly positive Lebesgue measure and a constant $\beta \in (0, 1), $ depending on $I , R$ and the coefficients of the system with  
\begin{equation}\label{doblinminorization}
P_{T} (z , dz' ) \geq \beta 1_C  (z) \nu ( dz') ,
\end{equation} 
where $ C = B_R ( z_* ) $ is the (open) ball of radius $R$ centred at $z_* ,$ and where $ \nu $ is the uniform probability measure on $  I.$ 
\end{prop}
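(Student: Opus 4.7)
The plan is to exhibit an explicit two-jump event on which the endpoint $X(T)$ is a smooth invertible function of the jump times, then combine this with positivity of the minimal jump intensities to produce a local density for $P_T(z,\cdot)$. Since any of the $N_+$ excitatory neurons produces the same jump vector $v_+ := (c_{++}/N_+, c_{+-}/N_+)$ on $(X_+, X_-)$, and any inhibitory neuron produces $v_- := (c_{-+}/N_-, c_{--}/N_-)$, the process $X$ is a two-type PDMP with aggregated rates $R_+(x) := \sum_i \psi^i_+(x) \geq \alpha_+ := \sum_i a^i_+ > 0$ and $R_-(y) := \sum_j \psi^j_-(y) \geq \alpha_- > 0$, both bounded on compact sets. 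Fix $T > 0$ and $z_* \in \R^2$, and consider the event $E(t_1, t_2)$ that exactly one $+$-jump at time $t_1$ and one $-$-jump at time $t_2$ occur in $[0, T]$, with no other jumps. Starting from $z = (x, y)$, on this event the endpoint is
\[
\Phi_z(t_1, t_2) = \bigl( e^{-\nu_+ T} x + \tfrac{c_{++}}{N_+} e^{-\nu_+(T-t_1)} + \tfrac{c_{-+}}{N_-} e^{-\nu_+(T-t_2)},\; e^{-\nu_- T} y + \tfrac{c_{+-}}{N_+} e^{-\nu_-(T-t_1)} + \tfrac{c_{--}}{N_-} e^{-\nu_-(T-t_2)} \bigr),
\]
with Jacobian
\[
J(t_1, t_2) = \tfrac{\nu_+ \nu_-}{N_+ N_-}\bigl[ c_{++} c_{--} e^{-\nu_+(T-t_1)-\nu_-(T-t_2)} - c_{+-} c_{-+} e^{-\nu_+(T-t_2)-\nu_-(T-t_1)} \bigr].
\]

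The key step is to locate $(t_1^0, t_2^0) \in (0, T)^2$ where $J \neq 0$, using Assumptions \ref{aslol} and \ref{ass:2d}. If $\nu_+ = \nu_-$, $J$ factors as a nowhere vanishing exponential times $c_{++} c_{--} - c_{+-} c_{-+} = N_+ N_- \det(v_+, v_-)$, nonzero by the independence clause of Assumption \ref{ass:2d}. If $\nu_+ \neq \nu_-$, Assumption \ref{aslol} enforces $c_{+-} |c_{-+}| > 0$: either $c_{++} c_{--} = 0$, in which case only one term survives in $J$ and $J$ never vanishes, or $c_{++} c_{--} < 0$ and $c_{+-} c_{-+} < 0$, in which case $J = 0$ reduces to the affine condition $(\nu_+ - \nu_-)(t_1 - t_2) = \log\bigl(c_{+-} c_{-+} / (c_{++} c_{--})\bigr)$, and $J \neq 0$ on the complement, which is a nonempty open subset of $(0, T)^2$. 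Having fixed such a point, the inverse function theorem with parameters produces a closed rectangle $R_0 \ni (t_1^0, t_2^0)$ and a radius $R > 0$ such that $\Phi_z$ is a diffeomorphism onto its image on $R_0$ for every $z \in B_R(z_*)$, with uniform bounds $\delta \leq |J| \leq M$ on $R_0 \times B_R(z_*)$; a continuity-in-parameter argument then furnishes an open ball $I$ centered at $\Phi_{z_*}(t_1^0, t_2^0)$ contained in $\Phi_z(R_0)$ for every such $z$.

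Finally, for $(t_1, t_2) \in R_0$ and $z \in B_R(z_*)$, the whole trajectory stays in a fixed compact set determined by $R, T$ and the jump vectors, so the aggregated rates along the path lie between $\alpha_\pm$ and some uniform $\bar\Lambda$. The joint density of $(t_1, t_2)$ restricted to $E(t_1, t_2)$, namely $R_+(X_+(t_1^-)) \cdot R_-(X_-(t_2^-)) \cdot \exp\bigl( -\int_0^T [R_+(X_+(u)) + R_-(X_-(u))] du \bigr)$, is therefore bounded below by some $c := \alpha_+ \alpha_- e^{-\bar\Lambda T} > 0$ uniformly on $R_0 \times B_R(z_*)$. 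Conditioning on $E$ and applying the change of variables $u = \Phi_z(t_1, t_2)$ gives, for any Borel $A$,
\[
P_T(z, A) \geq c \int_{R_0} \indiq_A(\Phi_z(t_1, t_2))\, dt_1 dt_2 \geq \tfrac{c}{M}\, \mathrm{Leb}(A \cap I),
\]
yielding the Doeblin bound with $\beta := c\,\mathrm{Leb}(I)/M$ and $\nu$ the uniform probability measure on $I$. I expect the main obstacle to be the Jacobian case analysis in the regime $\nu_+ \neq \nu_-$, where Assumption \ref{ass:2d} permits $v_+$ and $v_-$ to be colinear and one must exploit the distinct exponential decay rates together with the strict negativity of $c_{+-} c_{-+}$ from Assumption \ref{aslol}; the uniformity in $z$ in the inverse function theorem, the density lower bound, and the change of variables are all standard technicalities once this geometric point is settled.
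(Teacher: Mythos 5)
Your proof is correct, and for half of the argument it takes a genuinely different route from the paper. The paper splits according to Assumption \ref{ass:2d}: when $\nu_+=\nu_-$ it uses exactly your construction (the event of one excitatory and one inhibitory jump and nothing else, followed by a change of variables in the two uniform jump times, with invertibility of the matrix $C$ playing the role of your Jacobian); but when $\nu_+\neq\nu_-$ it instead lifts $X$ to the four-dimensional process $(X_{++},X_{+-},X_{-+},X_{--})$, invokes the Doeblin minorization of Clinet and Yoshida (Lemma 6.4 of \cite{Clinet}) for that process with a rectangular reference set, and projects back to $\R^2$ via the matrix $A$ using independence of the coordinates of the uniform vector on the rectangle. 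Your unified two-dimensional argument replaces that citation by the explicit Jacobian
$J=\tfrac{\nu_+\nu_-}{N_+N_-}\bigl[c_{++}c_{--}e^{-\nu_+(T-t_1)-\nu_-(T-t_2)}-c_{+-}c_{-+}e^{-\nu_+(T-t_2)-\nu_-(T-t_1)}\bigr]$, and your case analysis is sound: Assumption \ref{aslol} forces $c_{+-}c_{-+}<0$ (since $(c_{++}^*-c_{--}^*)^2<4c_{+-}|c_{-+}|$ with $c_{++}^*-c_{--}^*>0$), so the zero set of $J$ is at worst a line in $(t_1,t_2)$, while $c_{++}\geq 0\geq c_{--}$ makes the two terms never cancel when $c_{++}c_{--}=0$. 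This buys you two things the paper does not get: you avoid the paper's extra hypothesis $c_{++},c_{--},c_{+-},c_{-+}\neq 0$ in the $\nu_+\neq\nu_-$ case, and your argument works for every $T>0$ as the statement demands, whereas the paper's $\nu_+=\nu_-$ construction takes $T$ large to absorb the $z$-dependent translation $e^{-\nu_+T}z$ (you absorb it instead by shrinking $R$, which is legitimate since $R$ is part of the conclusion). The remaining ingredients of your write-up (lower bound $\alpha_+\alpha_-e^{-\bar\Lambda T}$ on the joint jump-time density, uniformity of the image ball $I$ since $\Phi_z$ is $\Phi_{z_*}$ plus a small translation) are standard and correctly handled.
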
 

\begin{proof}
We start with the case $ \nu_+ \neq \nu_-  ,$ under the assumption that $ c_{++}, c_{--}, c_{+-}, c_{-+} \neq 0 . $ In this case,  \cite{Clinet} in the proof of their Lemma 6.4 establish the lower bound \eqref{doblinminorization} for the four-dimensional Markov process 
$ \bar X = (X_{++}, X_{+-}, X_{-+}, X_{--} ) $ given by   
$$
 X_{++}  (t) =  e^{- \nu_+ t } X_{++} (0)  + \frac{c_{++}}{N_+}  \sum_{i=1}^{N_{+}}\int_0^t e^{- \nu_+ ( t- s) } Z_{+}^i (d s) , $$
$$ X_{-+} (t)  = e^{- \nu_+ t } X_{-+} (0)+ \frac{c_{-+}}{N_-}  \sum_{j=1}^{N_{-}}\int_0^t e^{- \nu_+ ( t- s) } Z_{-}^j  (ds),$$
$$X_{+-} (t) =  e^{- \nu_- t } X_{+-} (0) + \frac{c_{+-}}{N_+} \sum_{i=1}^{N_{+}}\int_0^t e^{- \nu_- ( t- s) }  Z_{+}^i (ds) ,$$
$$ X_{--} (t) =  e^{- \nu_- t } X_{--} (0)+   \frac{ c_{--}}{N_-}  \sum_{j=1}^{N_{-}}\int_0^t e^{- \nu_- ( t- s) }  Z_{-}^j  (ds) ,
$$ 
where $ X_{++} (0) + X_{-+} (0)  = X_+ (0)   , X_{--} (0)  + X_{+-} (0)  = X_- (0)  .$ 

More precisely, they show that for any $ \bar z_* \in \R^4 ,$ there exist $\bar R > 0 , $ an open rectangle $ \bar I \subset \R^4 $ with strictly positive Lebesgue measure and a constant $\bar \beta \in (0, 1), $ such that 
$$\bar P_{T} (\bar z , d\bar z' ) \geq \bar \beta 1_{\bar C} (\bar z) \bar \nu ( d \bar z') ,
$$
where $ \bar C = B_R ( \bar z_* ) $ is the (open) ball of radius $\bar R$ centred at $\bar z_* ,$ and where $ \bar \nu $ is the uniform probability measure on $ \bar  I.$ The above formula can be interpreted in the following way:  For any $ \bar z \in \bar C, $ with probability $\bar \beta, $ the law of $ \bar X (T) $ is equal to the law of $ U = (U_1, U_2, U_3, U_4) $ where $ U $ is a  uniform random vector on $ \bar I .$ Since $ \bar I $ is supposed to be a rectangle, this implies in particular the independence of its coordinates $ U_1, \ldots , U_4.$ 

Notice that we have $ X (T) = A \bar X (T) ,$ where   
$$ A = \left( \begin{array}{cccc}
1&0&1&0\\
0&1&0&1
\end{array}
\right) .$$
We now show how the above result implies the local lower bound for the original process $X.$ For that sake
let  $ z_* \in \R^2 $ be arbitrary and fix any $ \bar z_* \in \R^4 $ such that $A \bar z_* = z_* .$ Let $\bar R$ be the associated radius and choose $R$ such that $ B_R ( z_* ) \subset A B_{\bar R} ( \bar z_*) .$ Then for all $ z \in B_R (z_* )  $ and $ \bar z \in B_{\bar R} ( z_* ) $ with $ A \bar z = z, $  
$$ P_z ( X (T) \in \cdot ) =  P_{\bar z} (A  \bar X (T)  \in \cdot ) \geq \bar \beta \P ( A U \in \cdot )  .$$
Since 
$$ A U = \left( \begin{array}{c}
U_1 + U_3 \\
U_2 + U_4
\end{array}
\right) ,$$
by independence of the coordinates $ U_1, \ldots , U_4,$ this implies the desired result for the two-dimensional Markov process $X $ as well.

We finally deal with the case $ \nu_+ = \nu_- $ and $( c_{++},c_{+-} ) ,( c_{-+},c_{--}) $ linearly independent.  Fix $ z_* = (x_*, y_* ) $ and $ M> |x_*|+ |y_*|  $ arbitrarily and let $ H := \{ z = (x, y ) : |x| \le M, |y|\le M \} .$ Recall \eqref{eq:aplus} and  introduce finally the event  $E$ given by 
\begin{itemize}
\item $\pi^1_{+} ( [ 0,T] \times [  0,a^1_{+}]) =1,$
\item $\pi_{+}^1 ( [ 0,T] \times ]a^1_{+}, a^1_{+}+c_{++}  + M ) =0,$
\item $\pi_{+}^i ( [ 0,T] \times  [ 0, a^i_{+}+c_{++} + M  ) =0$ for all $ 2 \le i \le N_+,$ 
\item $\pi^1_{-} ( [ 0,T] \times [  0,a^1_{-}]) =1,$
\item $\pi_{-}^1 ( [ 0,T] \times  ] a^1_{-}, a^1_{-}+c_{+-}  + M ) =0,$
\item $\pi_{-}^j ( [ 0,T] \times  [ 0, a^j_{-}+c_{+-} + M  ) =0$ for all $ 2 \le j \le N_- .$ 
\end{itemize}
Define the substochastic kernel
$$
Q^T_{z} ( A) =P_z( E\cap \{ X ({T}) \in A\} )=P( E) P_z(  X ({T}) \in A   |   E) .
$$
The conditional law of $ X ({T}) $ given $  E,$ under $P_z,$ is equal to the law of
$$
Y_z ({T}) =ze^{-\nu_+  T}+e^{-\nu_+ U_{+}}\left(\begin{array}{c}c_{++}/N_+ \\c_{+-}/N_+ \end{array}\right)+ e^{-\nu_+  U_{-}}\left( \begin{array}{c}c_{-+}/N_-\\c_{--}/N_- .\end{array}\right) ,
$$
where the two jump-times $U_{+},U_{-}$ are independent uniform variables on $[ 0,T] .$ Since $$C=\left( \begin{array}{cc} c_{++}/N_+  & c_{-+}/N_-\\c_{+-}/N_+& c_{--}/N_- \end{array}
\right) $$ 
is invertible and the law of  $( e^{-\nu_+ U_{+}},e^{-\nu_+ U_{-}})$ is equivalent with the Lebesgue measure on $[ e^{-\nu_+ T},1] ^2, $ the law of $ Y_z ({T})$ has density
$$
f_{z}:v\mapsto | det\; C|^{-1} f\circ C^{-1}( v-ze^{-\nu_+ T}),
$$
where $f$ is the density of $( e^{-\nu_+ U_{+}}, e^{-\nu_+ U_{-}})$. The density is positive on the interior of its support
$$
supp( Y_z ({T}))=  e^{-\nu_+ T}z +C  [ e^{-\nu_+ T},1]^2 .
$$
Since $C$ is a homeomorphism, it is an open mapping. Thus we can find balls $B_r ( v_{0})\subset B_{2r} ( v_{0}) \subset C [ e^{- \nu_+ T},1]^2  $ for all $T>1.$ Take now $T$ so large that $e^{-\nu_+ T}\sup_{v \in  H} \|  v \| <r.$ For such $T$ and all $ z \in H$ we have
$$
  \overline{B}_r ( v_{0}) \subset e^{-\nu_+ T} z + B_{2r} ( v_{0})\subset supp( Y_z ({T})) .
$$
Note now that $H\times \overline{B}_r ( v_{0}) \ni (z,v) \mapsto f_{z}( v) $ is continuous, so the positivity of the density gives $\inf_{z \in H,v\in \overline{B}_r ( v_{0}) } f_{z}( v):=\alpha>0.$ We therefore conclude that
$$
Q^T_{z}( A)\geq P( E) \cdot \alpha \cdot  \lambda ( A\cap B_r ( v_{0}) ),
$$  
for all $z \in H,$ where $ \lambda $ denotes the Lebesgue measure on $\R^2.$ This proves the desired result.
\end{proof} 

We do now dispose of all ingredients to conclude the proof of Theorem \ref{theo:harris}.

\begin{proof}[Proof of Theorem \ref{theo:harris}]
1) We apply Proposition \ref{thm:Doeblin} with $z_* = 0 .$ Let $R$ be the associated radius. 

By Proposition \ref{prop:lyapunov}, we know that for a suitable compact set $K \subset \R^2,  $  $X $ comes back to $K $ infinitely often almost surely. For $ z = ( x, y ), $ write 
\begin{equation}\label{eq:flowy}
 \varphi_t (z) =  ( \varphi^{(1)}_t ( x) ,\varphi^{(2)}_t (y) )  = (e^{ - \nu_+  t}x , e^{- \nu_- t} y)
\end{equation}
for the flow of the process in between successiv jumps and let $ \| z\|_1 := |x| + |y|.$ Then
\begin{equation}
 \sup_{z \in K,  t \geq 0} \| \varphi_t (z) \|_1 := F  < \infty \; \; \mbox{ and } \; \;   \sup_{z \in K} \| \varphi_t (z) \|_1  \to 0 
\end{equation}
as $t \to \infty .$ Therefore there exists $t_* $ such that $\varphi_t (z) \in B_{R } ( 0) $ for all $t \geq t_* , $ for all $ z \in K .$ 
Hence, 
$$   \inf_{z\in K} P_z ( X ({t_* + s  })  \in B_R ( 0 ), 0 \le s \le 2T  )    > 0 . $$ 
Consequently, the Markov chain $(X ({kT}))_{k \in \N}  $ visits $ B_{R } (  0 )$ infinitely often almost surely. \\
The standard regeneration technique (see e.g. \cite{dashaeva}) allows to conclude that $(X ({kT}))_{k \in \N} $ and therefore $(X(t))_t $ are Harris recurrent. This concludes the proof of the Harris recurrence of the process.

2) The sampled chain $ (X ({kT }))_{k \geq 0 }$ is Feller according to Proposition \ref{prop:Feller}. Moreover it is $ \nu-$irreducible, where $ \nu $ is the measure introduced in Proposition \ref{thm:Doeblin}, associated with the point $z_* = (0,0) .$ Since $\nu$ is the uniform measure on some open set of strictly positive Lebesgue measure, the support of $\nu $ has non-empty interior. Theorem 3.4 of \cite{MT1992} implies that all compact sets are `petite' sets of the sampled chain. The Lyapunov condition established in Proposition \ref{prop:lyapunov} allows to apply Theorem 6.1 of \cite{MT1993} which implies the second assertion of the theorem.
\end{proof}

\end{document}